\documentclass{amsart}
\usepackage{amssymb, latexsym} 
\newtheorem{theorem}{Theorem}
\newtheorem{lemma}{Lemma}
\newtheorem{remark}{Remark}
\newtheorem{corollary}{Corollary}
  
\begin{document}

\title[A Generalization of a Power-Conjugacy Problem]{A Generalization of a Power-Conjugacy Problem in Torsion-Free Negatively Curved Groups}
\author{Rita Gitik}
\address{ Department of Mathematics \\ University of Michigan \\ Ann Arbor, MI, 48109}
\email{ritagtk@umich.edu}
\date{\today}

\begin{abstract}
{Let $H$ and $K$ be quasiconvex subgroups of a negatively curved torsion-free group $G$. 
We give an algorithm which decides whether an element of $H$ is conjugate in $G$ to an element of $K$.}
\end{abstract}

\subjclass[2010]{Primary: 20F10; Secondary: 20F65, 20F67}

\maketitle

\section{Introduction}

In 1911 Max Dehn introduced  in \cite{De} three basic algorithmic problems in
group theory: the word problem, the conjugacy problem, and the isomorphism problem. 
Let a group $G$ be given by a presentation $G=<X|R>$.
The word problem asks if there exists an algorithm to decide if any word 
in the alphabet $X$ represents the trivial element of $G$. The word problem was shown to be undecidable, in general, 
by Novikov, \cite{No}, and independently, by Boone, \cite{Bo}. The conjugacy problem
asks if there exists an algorithm which for any pair of words in the alphabet $X$  decides whether 
they represent conjugate elements in $G$. A special case of the conjugacy problem, namely the existence of an algorithm deciding if
a given word in the alphabet $X$ represents an element of $G$ conjugate to the identity of $G$, is the word problem. Hence the conjugacy
problem is also undecidable, in general. 
The isomorphism problem asks if for any pair of presentations there exists 
an algorithm to decide if they define isomorphic groups. The isomorphism problem was shown to be undecidable, in general, by Adian, \cite{Ad}, 
and independently by Rabin, \cite{Ra}.
The membership problem for a subgroup $H$ of a group $G$ asks if there exists an algorithm deciding if any element of $G$ belongs to $H$.
As the word problem, in general, is undecidable, it follows that the membership problem is, in general, undecidable.
The power-conjugacy problem for a group $G$ asks if for any two elements of $G$ there exists an algorithm to decide 
if one of them is conjugate to some power of the other. A special case of the power-conjugacy problem, namely the existence of an algorithm deciding if any element of $G$ is conjugate to some power of the identity element in $G$, is the word problem. Hence the power-conjugacy problem is undecidable, in general. For more detailed information about the aforementioned algorithmic problems see, for example, survey articles  \cite{Gi3}, \cite{Hu}, and \cite{Mi}.

Even though the aforementioned  algorithmic problems are undecidable in general, they are decidable in 
negatively curved groups. The solution of the word problem in negatively curved groups follows from the work of Greendlinger, \cite{Gre}. 
The solution of the conjugacy problem  for negatively curved groups was given by Gromov in \cite{Gro} p.199. The solution of the 
isomorphism problem for negatively curved groups was given by Dahmani and Guirardel in \cite{D-G}. The solution of the membership problem for quasiconvex subgroups of negatively curved groups was given by the author, \cite{Gi1}, \cite{Gi2}, and \cite{Gi3}, and independently, by Farb, \cite{Fa}, I. Kapovich, \cite{Ka}, and Kharlampovich, Miasnikov, and Weil, \cite{K-M-W}. The power-conjugacy problem was shown to be decidable when $G$ is negatively curved by Lysenok, \cite{Ly}.

The power-conjugacy problem has been the subject of extensive research and was solved for several additional classes of groups, see for example,
\cite{A-S}, \cite{Be}, \cite{B-D-R}, \cite{B-K}, \cite{B-M}, \cite{Co}, \cite{L-M}, and \cite{Pr}.
In this paper we prove a generalized version of the power-conjugacy problem for torsion-free negatively curved groups.

\begin{theorem}
Let $H$  and $K$ be $\mu$-quasiconvex subgroups 
of a $\delta$-negatively curved torsion-free group $G$.
There exists an algorithm to decide if an element 
of $H$ is conjugate in $G$ to an element of $K$.
\end{theorem}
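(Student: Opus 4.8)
The plan is to reduce the question to a finite search in the Cayley graph of $G$. The input is a word representing an element $h$; note that I only use that $h$ is a specific group element, so the quasiconvexity of $H$ plays no role in this direction. First I would settle the trivial case: using solvability of the word problem in $G$, test whether $h =_G 1$, and if so answer YES, since $1 \in K$. From now on assume $h \neq 1$. Because $G$ is torsion-free and negatively curved, $h$ then has infinite order and acts on the Cayley graph as a loxodromic isometry; it fixes exactly two boundary points $h^{\pm\infty}$, admits a bi-infinite geodesic axis, and has translation length $\tau(h) \le |h|$. Any $k$ conjugate to $h$ is loxodromic with the same translation length.

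The key step is the following bounded-representative claim: there is a constant $N = N(|h|,\mu,\delta)$, computable from the data, such that if $h$ is conjugate in $G$ to \emph{some} element of $K$, then $h$ is already conjugate to an element $k \in K$ with $|k| \le N$. Granting this, the algorithm is to enumerate the finitely many words $w$ with $|w| \le N$; for each, decide $w \in K$ using the solution of the membership problem for quasiconvex subgroups, and decide whether $w$ is conjugate to $h$ using the solution of the conjugacy problem in $G$. I answer YES if and only if some enumerated $w$ lies in $K$ and is conjugate to $h$. Soundness is clear, and the claim guarantees that a negative output is correct, since any conjugate of $h$ lying in $K$ would force a short witness within the search radius.

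To prove the claim, suppose $k \in K$ with $ghg^{-1} = k$, so $k$ is loxodromic with axis $A$ and fixed points $k^{\pm\infty}$. Since $k^n \in K$ for all $n$, the endpoints $k^{\pm\infty}$ lie in the limit set of $K$; and for a $\mu$-quasiconvex subgroup of a $\delta$-hyperbolic group, any bi-infinite geodesic joining two points of the limit set stays within a neighborhood of radius $N_0 = N_0(\mu,\delta)$ of $K$, with $N_0$ \emph{independent of $k$}. Hence $A$ lies in the $N_0$-neighborhood of $K$. Let $\pi$ be a nearest-point projection of the identity onto $A$ and choose $w \in K$ with $d(w,\pi) \le N_0$. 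Then $k' = w^{-1}kw \in K$ is conjugate to $h$, its axis is $w^{-1}A$, and the point $w^{-1}\pi$ on this axis satisfies $d(1, w^{-1}\pi) = d(w,\pi) \le N_0$. Finally, the standard hyperbolic displacement estimate $d(y, k'y) \le 2\, d(y, \mathrm{axis}(k')) + \tau(k') + C(\delta)$, applied at $y = 1$, gives $|k'| = d(1,k') \le 2N_0 + \tau(k) + C(\delta) \le 2N_0 + |h| + C(\delta) =: N$, which proves the claim.

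The substantive point, and the expected obstacle, is effectiveness: I need $N_0(\mu,\delta)$, the displacement error $C(\delta)$, and the relevant quasigeodesic constants to be genuinely computable from $\mu$, $\delta$, and the generating set, rather than merely to exist, so that the radius $N$ can be written down and the enumeration terminates. Each of these is a standard computable bound in the geometry of $\delta$-hyperbolic groups — the near-to-$K$ behavior of geodesics between limit points follows from the effective quasiconvexity of $K$, and the displacement estimate carries an explicit $\delta$-dependent error — so $N$ is computable and the algorithm halts. Beyond this, the only ingredients are the previously cited solutions of the word, conjugacy, and quasiconvex membership problems in $G$.
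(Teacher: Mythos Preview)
You have misread the question. The theorem is not asking, for a \emph{given} $h\in H$, whether $h$ is conjugate into $K$; it is asking whether there \emph{exists} a non-trivial element of $H$ that is conjugate in $G$ to an element of $K$. This is why the hypothesis includes quasiconvexity of $H$, and why Corollary~1 (``some power of $u$ is conjugate into $K$'') follows by taking $H=\langle u\rangle$: the algorithm must search over $H$, not receive a single $h$ as input. Your own remark that ``the quasiconvexity of $H$ plays no role'' is the tell. In the paper's proof that hypothesis is used essentially: Lemma~1 bounds the length of a shortest double-coset representative $g\in KgH$ that conjugates \emph{some} element of $H$ into $K$, and Lemma~2 then bounds the length of a shortest non-trivial witness $h\in H$ for such a short $g$; both bounds depend on the quasiconvexity constant of $H$, and the algorithm is a finite enumeration of pairs $(g_1,h_1)$ checked against membership in $H$ and in $K$.

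Your axis argument is a reasonable route to the \emph{specific-element} problem (essentially the content of Corollary~2), and the idea of replacing $k$ by a $K$-conjugate whose axis passes near the identity is sound. But it does not address the existential problem: with no fixed $h$ there is no $\tau(h)$ to feed into your bound $N$, so no finite enumeration over $K$ is available. To recover the actual theorem you would still need an independent, computable bound on the length of a shortest witness $h\in H$, which is precisely what Lemma~2 supplies and what your proposal lacks.
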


\begin{corollary} 
Let $K$ be a quasiconvex subgroup of a
torsion-free negatively curved group $G$,
and let $u$
be a non-trivial element of $G$. There exists an algorithm to decide 
whether some power of $u$ is
conjugate in $G$ to an element of $K$.
\end{corollary}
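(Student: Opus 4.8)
The plan is to reduce the corollary to the theorem by taking $H$ to be the cyclic subgroup generated by $u$. Since $G$ is torsion-free and $u \neq 1$, the element $u$ has infinite order, so $H := \langle u\rangle$ is infinite cyclic and $\{u\}$ is an explicit finite generating set for it. The elements of $H$ are exactly the powers of $u$, so the question ``is some power of $u$ conjugate in $G$ to an element of $K$?'' is verbatim the question answered by the algorithm of the theorem applied to this $H$ and the given $K$, provided we can exhibit $H$ as a quasiconvex subgroup with an \emph{explicitly computable} quasiconvexity constant. (If one insists on excluding the trivial power $u^0 = 1$, observe that $1$ is conjugate in $G$ to an element of $K$ precisely when $1 \in K$, a condition that is trivially checkable, so this makes no difference to the algorithmic content of the statement.)

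So the actual work is to produce a computable quasiconvexity constant for $\langle u\rangle$, and this is where negative curvature and torsion-freeness enter. It is a standard fact about $\delta$-negatively curved groups that for an infinite-order element $u$ the orbit map $n \mapsto u^n$ is a quasigeodesic whose parameters are controlled by $\delta$ and the word length $|u|$: the upper Lipschitz bound is $|u|$, while a positive lower bound for the stable translation length $\tau(u) = \lim_n |u^n|/n$ --- and hence the lower quasigeodesic constant --- is available both a priori (translation lengths in a $\delta$-negatively curved group are bounded away from $0$ by a function of $\delta$ and the chosen generating set) and effectively (one computes $|u^n|$ for growing $n$ using the solution of the word problem in $G$). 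By the stability of quasigeodesics in hyperbolic spaces, the orbit $\langle u\rangle$ then lies within Hausdorff distance $\mu' = \mu'(\delta, |u|)$ of a geodesic line, and a suitable enlargement of this $\mu'$, which is computable, serves as a quasiconvexity constant for $H = \langle u\rangle$.

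Feeding $H = \langle u\rangle$ together with its constant $\mu'$, the subgroup $K$ together with its quasiconvexity constant, and the ambient constant $\delta$ into the algorithm provided by the theorem yields an algorithm that decides whether an element of $H$ --- equivalently, some power of $u$ --- is conjugate in $G$ to an element of $K$, which is exactly the assertion of the corollary. The only non-formal point is the second paragraph, namely turning the (well-known) quasiconvexity of cyclic subgroups of negatively curved groups into an \emph{effective} bound; once that bookkeeping is in place, the corollary follows immediately from the theorem.
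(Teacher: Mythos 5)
Your proposal is correct and takes the same route as the paper: reduce to Theorem 1 by taking $H=\langle u\rangle$ and invoking the quasiconvexity of cyclic subgroups of negatively curved groups (the paper simply cites Gromov for this and does not elaborate on the effectivity of the quasiconvexity constant, which your second paragraph supplies as an extra, but not divergent, precaution).
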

\begin{proof}
As a cyclic subgroup in a negatively curved group is quasiconvex, 
\cite{Gro} p. 210, we can apply Theorem 1 with $H$ being the cyclic
subgroup generated by $u$.
\end{proof}

\begin{corollary} 
Let $K$ be a quasiconvex subgroup of a
torsion-free negatively curved group $G$ and let $u$
be a non-trivial element of $G$. There exists an algorithm to decide 
whether  $u$ is conjugate in $G$ to an element of $K$.
\end{corollary}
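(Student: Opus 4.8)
The plan is to reduce the question to a finite search, using three effectivity ingredients available here: the conjugacy problem in $G$ is solvable \cite{Gro}, the membership problem for the quasiconvex subgroup $K$ is solvable \cite{Gi1, Fa, Ka, K-M-W}, and $K$ is finitely generated, is quasi-isometrically embedded in $G$, and is itself hyperbolic, with a hyperbolicity constant $\delta_K$ and a finite generating set computable from $\delta$, $\mu$, and the generating set $X$ of $G$. It should be noted at the outset that Theorem 1 does not by itself yield this corollary: applied to $H=\langle u\rangle$ it only decides whether \emph{some} power of $u$ is conjugate into $K$ (this is Corollary 1), and in a torsion-free negatively curved group a proper power $u^n$ can be conjugate into $K$ while $u$ itself is not --- for instance $a^2\in\langle a^2\rangle$ in the free group on $a$ and $b$, yet $a$ is not conjugate into $\langle a^2\rangle$. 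So a separate argument is needed.

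First I would pass to translation lengths. Since $G$ is torsion-free and $u\neq 1$, the element $u$ has infinite order, and its stable translation length $\tau(u)$ is a positive rational that can be computed. If $u$ is conjugate in $G$ to an element $k\in K$, then $\tau_G(k)=\tau_G(u)$; since the inclusion $K\hookrightarrow G$ is a quasi-isometric embedding with constants computable from $\delta$ and $\mu$, this forces $\tau_K(k)\le L$ for a constant $L$ computable from $\tau(u)$, $\delta$, and $\mu$. Replacing $k$ by a shortest element of its conjugacy class in $K$ --- which is still conjugate in $G$ to $u$ --- I may assume that $k$ is cyclically minimal in $K$, and then a standard estimate in the hyperbolic group $K$ bounds $|k|$ in terms of $\tau_K(k)$, hence $|k|_X\le N$ for a computable constant $N=N(\tau(u),\delta,\delta_K,\mu)$.

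The algorithm is now: enumerate the finitely many $k\in K$ with $|k|_X\le N$ (effectively listable, since membership in $K$ is decidable), and for each of them decide, using the solution of the conjugacy problem in $G$, whether $u$ is conjugate to $k$ in $G$; output ``yes'' if some $k$ works and ``no'' otherwise. Correctness is immediate from the second paragraph: any $k\in K$ that is conjugate in $G$ to $u$ may be replaced by one of length at most $N$.

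The step I expect to be the main obstacle is the geometric content of the second paragraph: the effectivity of the stable translation length and of the hyperbolic structure on $K$, and especially the passage from a bound on $\tau_K(k)$ to a bound on $|k|$ for a cyclically minimal $k$. These are exactly the estimates that the proof of Theorem 1 already has to develop, so in practice one would invoke the lemmas established there. An equivalent packaging of the same content, closer in spirit to Theorem 1, is to bound instead the length of a conjugating element directly: if $gug^{-1}\in K$, then --- applying thinness of the geodesic quadrilateral with vertices $1$, $g$, $gu$, $gug^{-1}$, together with the $\mu$-quasiconvexity of $K$ and the cyclical minimality of $u$ --- one can take $|g|$ bounded by a computable constant, and then search over short $g$, testing the membership $gug^{-1}\in K$.
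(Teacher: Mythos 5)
Your algorithm is correct, but it runs in the opposite direction from the paper's. You bound the length of the \emph{target} element $k\in K$ (via translation lengths, the quasi-isometric embedding $K\hookrightarrow G$, effective hyperbolicity of $K$, and cyclic minimality in $K$), then search over short elements of $K$ and test conjugacy to $u$ in $G$. The paper instead bounds the length of the \emph{conjugator}: it applies Lemma 1 with $H=\langle u\rangle$, noting (via Remark 1 and the fact that $\langle u\rangle$ is abelian, so $h_0^{-1}uh_0=u$) that every element of the double coset $Kg\langle u\rangle$ conjugates $u$ itself --- not merely some power of $u$ --- into $K$; hence a shortest double-coset representative $g_1$ satisfies $|g_1|<C$ and $g_1ug_1^{-1}\in K$, and one searches over the finitely many $g$ with $|g|<C$, testing membership of $gug^{-1}$ in $K$. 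You correctly observe that Theorem 1 alone does not give the corollary, and your closing remark about bounding the conjugator is essentially the paper's route. The trade-off: the paper's proof is self-contained, using only Lemma 1 and the decidability of membership in $K$, whereas your main argument imports several standard but nontrivial effectivity facts not developed in the paper (computability of $\delta_K$ and of the quasi-isometry constants, the translation-length estimate for cyclically minimal elements, and the solvability of the conjugacy problem in $G$); also, you do not actually need to compute $\tau(u)$ --- the trivial bound $\tau_G(u)\le|u|$ suffices for your purposes and removes the one genuinely delicate computability claim.
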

\begin{proof}
Lemma 1, stated below, shows that if $H$ is the cyclic
subgroup generated by $u$ and $u$ is conjugate
to an element of $K$, then there exists $g \in G$ with $|g| <C$ such that
$g u g^{-1} \in K$, ($C$ is defined in the statement of Lemma 1).
As $G$ is finitely generated, there are only finitely many elements shorter than $C$ in $G$.
Hence we need to check if one of finitely many 
elements of the form $g u g^{-1}$ with  $|g| <C$ is in $K$, which we can do because the
membership problem for $K$ in $G$ is decidable.
\end{proof}

\begin{corollary}
The power-conjugacy problem is decidable for torsion-free negatively curved groups.
\end{corollary}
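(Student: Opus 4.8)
The plan is to derive Corollary~4 directly from Corollary~2, with essentially no further work. Recall that the power-conjugacy problem for $G$ asks, given two elements $u$ and $v$ of $G$ presented as words in the generators, to decide whether $u$ is conjugate in $G$ to some power of $v$ or $v$ is conjugate in $G$ to some power of $u$.

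First I would dispose of the degenerate cases. Since the word problem in $G$ is solvable (\cite{Gre}), we can decide whether $u$ or $v$ is trivial, and if either is trivial the answer is immediate. So assume that $u$ and $v$ are both non-trivial. Because $G$ is torsion-free, the cyclic subgroups $\langle u \rangle$ and $\langle v \rangle$ are infinite cyclic, hence quasiconvex in $G$ by \cite{Gro} p.~210.

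Now apply Corollary~2 twice. Running it with $K = \langle v \rangle$ and the element $u$ decides whether $u$ is conjugate in $G$ to an element of $\langle v \rangle$, which is precisely the question of whether $u$ is conjugate to a power of $v$; running it with $K = \langle u \rangle$ and the element $v$ decides, symmetrically, whether $v$ is conjugate to a power of $u$. The algorithm outputs ``yes'' if and only if at least one of these two runs outputs ``yes''. There is no genuine obstacle at this stage: all the content resides in Theorem~1 (and in Lemma~1, invoked in the proof of Corollary~2), and the only things that need checking — that an infinite cyclic subgroup of $G$ is quasiconvex, and that ``conjugate to an element of $\langle v \rangle$'' unwinds by definition to ``conjugate to a power of $v$'' — are immediate.
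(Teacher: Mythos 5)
Your proposal is correct and takes essentially the same route as the paper: the paper likewise deduces this corollary by applying Corollary~2 with $K$ the cyclic subgroup generated by $v$ (quasiconvexity of cyclic subgroups having already been invoked in Corollary~1). Your additional care in disposing of trivial $u$ or $v$ via the word problem and in running the algorithm in both directions (``$u$ conjugate to a power of $v$'' and ``$v$ conjugate to a power of $u$'') only makes the argument slightly more complete than the paper's one-line version.
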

\begin{proof}
Let $u$ be a non-trivial element of $G$ and let $v$ be any element of $G$.
Corollary 2 implies that there is an algorithm to decide whether $u$ is conjugate in $G$
to an element of a cyclic group generated by $v$, which is the power-conjugacy problem.
\end{proof}

Theorem 1 follows from two  technical results stated below.

\begin{lemma} 
Let  $H$  and $K$ be $\mu$-quasiconvex subgroups 
of a $\delta$-negatively curved torsion-free group $G$, 
and let $g \in G$ be a shortest representative of 
the double coset $KgH$ such that $ghg^{-1}$ is in $K$ 
for some non-trivial element $h$ of  $H$. 
Then $g$ is shorter than $C=4\delta +2\mu +(m^2 +1) \cdot L$, where
$L$ is the number of words in $G$ with 
length less than $ 8 \delta +\mu$, 
and $m$ is the number of elements in $G$ with length not greater
than $42 \delta +12 \mu$.
\end{lemma}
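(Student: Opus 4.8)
\noindent\emph{Proof strategy.} The plan is to argue by contradiction: assuming $|g|\ge C$, I will produce a representative of the double coset $KgH$ strictly shorter than $g$. Set $k=ghg^{-1}$, so that $k\in K$; since $G$ is torsion-free and $h\neq 1$, both $h$ and $k$ have infinite order. The identity $gh=kg$ lets me build a geodesic quadrilateral with vertices $1$, $g$, $gh\,(=kg)$, $k$ and geodesic sides $\alpha=[1,g]$, $[g,gh]$, $[gh,k]$, $[k,1]$. Two of these sides carry extra information: $[g,gh]$ can be taken to be $g\cdot[1,h]$, which lies in the $\mu$-neighbourhood of $gH$ because $H$ is $\mu$-quasiconvex; $[k,1]$ lies in the $\mu$-neighbourhood of $K$ for the same reason; and $[gh,k]$ equals $k\cdot[g,1]$, i.e.\ a translate of $\alpha$ traversed backwards. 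First I would use the minimality of $g$ to confine $\alpha$: if a vertex $\alpha(t)$ were within $2\delta$ of the side $[k,1]$, it would lie within $2\delta+\mu$ of some $k'\in K$, whence $k'^{-1}g\in KgH$ has length at most $|g|-t+2\delta+\mu$, forcing $t\le 2\delta+\mu$; symmetrically, proximity of $\alpha(t)$ to $[g,gh]\subseteq N_\mu(gH)$ produces $gh'\in KgH$ of length at most $t+2\delta+\mu$ and forces $t\ge|g|-2\delta-\mu$. Since a geodesic quadrilateral is $2\delta$-thin, for every integer $t$ in the window $2\delta+\mu<t<|g|-2\delta-\mu$ the vertex $\alpha(t)$ must lie within $2\delta$ of the remaining side $[gh,k]=k\cdot[g,1]$.

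Because $\alpha$ and $k\cdot[g,1]$ are co-oriented geodesics, a routine thin-triangle estimate on the previous fact should give an explicit constant bounding $|\alpha(t)^{-1}k\,\alpha(t)|$ for all $t$ in that window — I expect it to come out below $8\delta+\mu$, which is the role of the quantity $L$. Now comes the pigeonhole: once $|g|\ge C$ the window contains more than $(m^2+1)L$ integers, while $t\mapsto \alpha(t)^{-1}k\,\alpha(t)$ takes at most $L$ values there, so some value is attained at more than $m^2+1$ parameters $t_0<t_1<\dots<t_{m^2}$. For all $i<j$ the element $\alpha(t_j)\alpha(t_i)^{-1}$ then commutes with $k$ and is non-trivial; as $G$ is torsion-free and $k$ has infinite order, the centralizer of $k$ is infinite cyclic, say $\langle z\rangle$, and each of these elements, together with $k$, lies in $\langle z\rangle$.

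The last step is excision. Deleting from $\alpha$ the subarc between $\alpha(t_i)$ and $\alpha(t_j)$ (for $i<j$) replaces $g$ by $\bigl(\alpha(t_j)\alpha(t_i)^{-1}\bigr)^{-1}g$, a strictly shorter element equal to $z^{-e}g$ where $z^{e}=\alpha(t_j)\alpha(t_i)^{-1}$; this element lies in $KgH$ exactly when $z^{e}\in K$, i.e.\ when $\alpha(t_i)$ and $\alpha(t_j)$ represent the same coset of $\langle z\rangle\cap K$ in $\langle z\rangle$. The purpose of the coarser length scale $42\delta+12\mu$ and of the count $m$ is to bound the index of $\langle z\rangle\cap K$ in $\langle z\rangle$ by $m^2$ — this is where the quasiconvexity of $K$ (and, in the general statement, that of $H$) is used decisively; granting it, two of the $m^2+1$ elements $\alpha(t_i)\alpha(t_0)^{-1}$ fall in a common coset, and the associated excision yields a representative of $KgH$ shorter than $g$, the desired contradiction. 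Along the way one must also verify the analogous minimality statements for the left coset $Kg$ and bound the slack in the co-orientation estimate, but these are bookkeeping.

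I expect the main obstacle to be precisely the quantitative point just flagged: showing that the infinite cyclic subgroup $\langle z\rangle$ produced by torsion-freeness meets the quasiconvex subgroup $K$ with index bounded only in terms of $\delta$ and $\mu$ (hence by a function of $m$). The danger is that this index could a priori depend on $|k|$, hence on $|g|$, which would make the whole scheme circular; ruling that out — presumably by tracking a pair of bounded-length elements of length at most $42\delta+12\mu$ that record how $\langle z\rangle$ sits relative to $K$ and to $g^{-1}Kg\cap H$ — is the real content. By comparison, the geometry of the thin quadrilateral and the two pigeonhole arguments are routine.
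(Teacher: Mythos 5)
Your geometric setup and first pigeonhole reproduce the paper's argument faithfully: the confinement of the geodesic $[1,g]$ away from $[1,k]$ and from $g\cdot[1,h]$ via minimality in the double coset, the resulting $2\delta$-closeness of $\alpha(t)$ to the translated side $k\cdot[1,g]$, the bound $|\alpha(t)^{-1}k\alpha(t)|<8\delta+\mu$ on the window, and the extraction of $m^2+1$ parameters $t_0<\dots<t_{m^2}$ realizing a common value $a=\alpha(t_i)^{-1}k\alpha(t_i)$ are all exactly what the paper does (its Lemma 3 together with the first half of the proof of Lemma 1). The gap is in what you do next. You pass to the centralizer $\langle z\rangle$ of $k$ and try to excise a subarc of $[1,g]$, which requires finding $i<j$ with $\alpha(t_j)\alpha(t_i)^{-1}\in K$; for that you need $[\langle z\rangle:\langle z\rangle\cap K]\le m^2$, and you concede that you do not know how to prove this bound. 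As written the argument is therefore incomplete at its decisive step, and the specific constant $C=4\delta+2\mu+(m^2+1)L$ of the lemma cannot be recovered without that exact bound.

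The missing ingredient is Theorem GMRS (the width theorem of Gitik--Mitra--Rips--Sageev, stated in the paper's preliminaries): among more than $m^2$ essentially distinct conjugates of a $\mu$-quasiconvex subgroup, some pair intersects trivially, where $m$ counts the elements of length at most $42\delta+12\mu$. The paper applies it directly and avoids centralizers altogether: the conjugates $\alpha(t_i)^{-1}K\alpha(t_i)$, $0\le i\le m^2$, all contain the same non-trivial element $a$, hence (by torsion-freeness) pairwise intersect in an infinite subgroup; and they are essentially distinct, because $\alpha(t_j)\alpha(t_i)^{-1}\in K$ would allow one to splice that element out of $g$ and obtain a strictly shorter representative of $KgH$ --- this is precisely your excision, run in the contrapositive. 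Having more than $m^2$ such conjugates contradicts GMRS, which finishes the proof. Your route can in fact be salvaged by the same theorem: distinct cosets of $\langle z\rangle\cap K$ in $\langle z\rangle$ yield essentially distinct conjugates $z^{-j}Kz^{j}$ all containing $k$, so GMRS bounds the index by $m^2$ independently of $|k|$. But that is strictly more work than the paper's direct application, and without GMRS or an equivalent finite-width statement your argument does not close.
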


\begin{lemma}
Let  $H$  and $K$ be $\mu$-quasiconvex subgroups 
of a $\delta$-negatively curved torsion-free group $G$ and let $h$ be a 
shortest non-trivial element of  $H$ such that $ghg^{-1}$ is in $K$ 
for some $g \in G$ with $|g| <C$. Then $h$ is shorter than 
$C'= (L'+2)2\mu+8 \delta$, where $L'$ is the number of words in $G$ shorter than $(2 \delta +2\mu)$.
\end{lemma}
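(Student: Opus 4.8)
The plan is to run a pigeonhole (``pumping'') argument on geodesics, exploiting the relation $gh=kg$ where $k:=ghg^{-1}\in K$, together with the $\mu$-quasiconvexity of $H$ and of $K$. The outcome will be: starting from the given $h$, one can manufacture a \emph{non-trivial} element $h'\in H$ that is conjugated into $K$ by the \emph{same} $g$ (hence by an element of length $<C$) and that satisfies $|h'|\le C'$. Since $h$ is by hypothesis a shortest element of $H$ with this property, $|h|\le |h'|\le C'$ will follow.

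First I would pass to a large power. Because $G$ is torsion-free and negatively curved, the non-trivial element $h$ has infinite order and $|h^{n}|\to\infty$; also $gh^{n}g^{-1}=k^{n}\in K$ for every $n\ge 1$. Fix $n$ so large that $|h^{n}|>2|g|+C'+8\delta$ — crucially the right-hand side does not depend on $n$, so this is possible. Form the geodesic quadrilateral with vertices $1,\ g,\ gh^{n},\ k^{n}$ (legitimate since $gh^{n}=k^{n}g$); its four sides are geodesics joining consecutive vertices, of lengths $|g|,\ |h^{n}|,\ |g|,\ |k^{n}|$. Call $\gamma$ the side from $g$ to $gh^{n}$ (it is $g$ times a geodesic representing $h^{n}$) and $\sigma$ the side from $1$ to $k^{n}$ (a geodesic representing $k^{n}$). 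By $\delta$-thinness of the quadrilateral, every point of $\gamma$ at distance more than $|g|+2\delta$ from each of $g$ and $gh^{n}$ lies within $2\delta$ of $\sigma$; by the choice of $n$ these points form an arc $\gamma'$ of length greater than $C'+4\delta$. Since $h^{n}\in H$ and $k^{n}\in K$, the geodesic $g^{-1}\gamma$ lies in the $\mu$-neighbourhood of $H$ and $\sigma$ lies in the $\mu$-neighbourhood of $K$; so to each point $p\in\gamma'$ I can attach $h_{p}\in H$ and $k_{p}\in K$ with $d(p,gh_{p})\le\mu$ and $d(p,k_{p})\le 2\delta+\mu$, whence $h_{p}^{-1}g^{-1}k_{p}$ has length at most $2\delta+2\mu$ (the relevant inequalities can be made strict). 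By the definition of $L'$ there are at most $L'$ possibilities for this element, so choosing more than $L'$ sample points along $\gamma'$, placed a little more than $2\mu$ apart, forces by pigeonhole two samples $p,q$ with $h_{p}^{-1}g^{-1}k_{p}=h_{q}^{-1}g^{-1}k_{q}$. Rearranging gives $g\,(h_{q}h_{p}^{-1})\,g^{-1}=k_{q}k_{p}^{-1}$, so $h':=h_{q}h_{p}^{-1}\in H$ is conjugated into $K$ by $g$, and a bookkeeping of the estimates above bounds $|h'|=d(h_{p},h_{q})$ by $C'$.

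The delicate points are two. The conceptual one is the \emph{independence of $C'$ from $|g|$} (and from $|h|$): applied directly to $h$ itself the argument only yields an arc of length about $|h|-2|g|$ to pigeonhole along, hence a bound on $|h|$ that involves $C$; passing to the power $h^{n}$ removes this, since $|h^{n}|$ grows without bound while the two ``short'' sides of the quadrilateral keep length $|g|$. The technical one is ensuring the element $h'$ is \emph{non-trivial}: this is why the sample points on $\gamma'$ must be spaced a little more than $2\mu$ apart, so that the attached elements $h_{p}$ are forced to be pairwise distinct (two points more than $2\mu$ apart cannot share an attached $H$-element, since each lies within $\mu$ of its point), while keeping the total spread — and hence $|h'|$ — at most $(L'+2)\cdot 2\mu+8\delta=C'$. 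Pinning down the remaining $\delta$- and $\mu$-estimates, and the exact number of sample points making both of these work simultaneously, is routine $\delta$-hyperbolic bookkeeping.
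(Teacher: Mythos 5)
Your argument is correct in outline and reaches the stated bound, but it is organized quite differently from the paper's proof, so a comparison is worth recording. The paper splits into three cases according to the size of $|g|$: when $4\delta+2\mu\le|g|<C$ it quotes the estimate $d(v_b,v'_b)<4\delta+\mu$ from its Lemma 3 to bound $|h|$ directly by $3\mu+8\delta$; when $|g|<4\delta+2\mu$ and $h$ is long it invokes a separate pumping lemma (Lemma 4) that runs a pigeonhole along the portion of a geodesic for $h$ lying in the $2\delta$-neighborhood of the geodesic $p_k$ for $k$, producing a short $h_0$ together with a \emph{new} conjugator $g_0$ of length $<2\delta+2\mu$. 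Your power trick --- replacing $h$ by $h^n$ with $|h^n|$ large, legitimate because $gh^ng^{-1}=k^n\in K$ and infinite-order elements of hyperbolic groups satisfy $|h^n|\to\infty$ --- makes the middle side of the quadrilateral long regardless of $|g|$, so a single pigeonhole handles all cases at once; moreover your rearrangement $g(h_qh_p^{-1})g^{-1}=k_qk_p^{-1}$ keeps the \emph{same} conjugator $g$, and you never need $g$ to be a shortest double-coset representative (a property the paper's Lemma 4 quietly uses to control the ends of $p_h$). The price is the extra input about growth of powers, which is not needed in the paper's route. One bookkeeping point must be fixed: if you space the sample points \emph{strictly} more than $2\mu$ apart, i.e. at distance at least $2\mu+1$, then the spread of $L'+1$ samples can be as large as $L'(2\mu+1)$, and the resulting bound $|h'|\le L'(2\mu+1)+2\mu$ exceeds $C'=(L'+2)2\mu+8\delta$ as soon as $L'>2\mu+8\delta$, which always happens since $L'$ grows exponentially in $2\delta+2\mu$. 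Instead, space the samples at distance exactly $2\mu$ and use strict inequalities $d(p,gh_p)<\mu$ from quasiconvexity (as the paper does in its Lemma 4) to guarantee $h_p\ne h_q$; then $|h'|\le (L'+1)2\mu<C'$ and the argument closes.
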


\begin{remark}
Note that if there exist $h \in H$ and $g \in G$ such that $ghg^{-1} \in K$, 
then for any $h_0 \in H$ and $k_0 \in K$, 
$(k_0 g h_0) (h_0^{-1} h h_0) (h_0^{-1}g^{-1} k_0^{-1}) \in K$.
So if $g \in G$ conjugates an element of $H$ to an element of $K$, 
then any $g_0$ in the double coset $KgH$ has the same property. 
\end{remark}

\textbf{Proof of Theorem 1.}

Assume that there exists a non-trivial element $h \in H$ and an element
$g \in G$ such that $ghg^{-1} \in K$. Let $g_1$ be a shortest element in the double coset $KgH$.
Lemma 1 states that $|g_1| <C$.
Remark 1 implies that there exists an element $h' \in H$ such that $g_1h'g_1^{-1} \in K$. 
Let $h_1 \in H$ be a shortest non-trivial element such that  
$g_1h_1g_1^{-1} \in K$.  Lemma 2 states that $|h_1| < C'$.
As $G$ is finitely generated, there are finitely many possible $g_1$ and 
$h_1$. Hence we need to form finitely many products  $g_1h_1g_1^{-1}$ 
and to check if they belong to $K$, which we can verify because the 
generalized word
problem is solvable for quasiconvex subgroups of negatively curved
groups.

\section{Preliminaries}

Let $X$  be a set, let $X^* = \{x,x^{-1} |x \in X \}$, and for  
$x \in X$ define $(x^{-1})^{-1} =x$. 
A word in $X$  is any finite sequence of elements of $X^*$.
Denote the set of all words in $X$ by W(X), and denote the equality
of two words by " $ \equiv $".

Recall that the Cayley graph of $G= \langle X|R \rangle $, denoted $Cayley(G)$,
is an oriented graph whose set of vertices is $G$ and the set of edges is 
$G \times X^*$, such that   an edge $(g,x)$ begins at the vertex $g$ and ends at the 
vertex $g x $. Since the Cayley graph  depends on the generating set of the group, we work with 
a fixed generating set.

A geodesic in the Cayley graph is a shortest path joining two vertices.
A geodesic triangle in  the Cayley graph is a closed path
$p=p_1p_2p_3$, where each $p_i$ is a geodesic.
A group $G=\langle X|R \rangle$ is $\delta$-negatively curved if any side of any
geodesic triangle in  the Cayley graph of $G=\langle X|R \rangle $ belongs to the 
$\delta$-neighborhood of the union of the other two sides.

A subgroup $H$ of a group $G= \langle X|R \rangle $ is $\mu$-quasiconvex  in 
$G= \langle X|R \rangle $ if any geodesic in the Cayley graph of 
$G= \langle X|R \rangle $ with endpoints in $H$ belongs to the $\mu$-neighborhood of $H$. 
A subgroup is quasiconvex in $G= \langle X|R \rangle $ if it is $\mu$-quasiconvex in 
$G= \langle X|R \rangle $ for some $\mu$. 
As usual, we assume that all negatively curved groups are finitely generated.

The label of a path   
$p = (g,x_1) (g \cdot x_1, x_2) \cdots (g \cdot x_1 \cdots x_{n-1},x_n)$
in $Cayley(G)$ is the function $Lab (p) \equiv x_1 x_2 \dots x_n \in W(X)$.
As usual, we identify the word $Lab(p)$ with the corresponding element in $G$.

\bigskip

The following result has been proven in \cite{G-M-R-S}.

\textbf{Theorem GMRS}

\emph{Let $H$ be a $\mu$-quasiconvex subgroup of a $\delta$-negatively curved torsion-free group $G$, 
and let $m$ be the number of elements in $G$ with length not greater
than $42 \delta +12 \mu$. Let $S= \{ g_i^{-1} H g_i |1 \le i \le n \}$
be a collection of  essentially distinct conjugates of $H$, where
the conjugates $g_i^{-1} H g_i$ and $g_j^{-1} H g_j$ are
called essentially distinct if $Hg_i \neq Hg_j$ for $i \neq j$. 
If $n > m^2$, then  the intersection of some pair of elements of $S$ is
trivial.} 

\section{Proofs of the Results}

Let $g$ be a shortest element in the double 
coset $KgH$ such that   $ghg^{-1}=k$ is in $K$ for some non-trivial 
element $h$ of $H$. 

Let $p, p_h$ and  $p'$ be geodesics in $Cayley(G)$ such that 
$Lab( p) \equiv Lab(p') \equiv g, Lab( p_h) =h$, $p$ begins 
at $1$ and ends at $g$, $p'$ begins at $ghg^{-1}$ and ends at $gh$, and $p_h$ 
begins at $g$ which is the endpoint of $p$ and ends at $gh$ which is 
the endpoint of $p'$. 

Denote the vertices of $p$ in their linear order by $1=v_0,v_1, \cdots ,
v_n=g$ and denote the vertices of $p'$ in their linear order by $ghg^{-1} =v'_0,v'_1, \cdots , 
v'_n=gh$. Note that  $|g|=|p|=|p'|=n$.

Let $p_k$ be a geodesic in $Cayley(G)$ joining  $v_0=1$ and $v'_0=ghg^{-1}$.
Then the paths $p, p_k, p' $ and  $p_h$ form a geodesic $4$-gon 
which is $2 \delta$-thin in $Cayley(G)$, because $G$ is 
$\delta$-negatively curved.

\begin{lemma}
For any index $i$ such that $ 2 \delta +\mu \le i \le 
n-2 \delta - \mu$ the distance 
$d(v_i,v'_i)$ is less than $8 \delta + \mu$.
\end{lemma}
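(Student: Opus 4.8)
\emph{Setup and ingredients.} The configuration already fixed above gives a geodesic $4$-gon with consecutive vertices $1$, $g$, $gh$, $ghg^{-1}$ and sides $p$, $p_h$, $p'$, $p_k$, which is $2\delta$-thin. I would first record four elementary facts. (a)~Since both $1$ and $ghg^{-1}$ lie in $K$, $\mu$-quasiconvexity of $K$ puts the whole side $p_k$ in the $\mu$-neighbourhood of $K$. (b)~Since $g^{-1}p_h$ is a geodesic from $1$ to $h$ with both endpoints in $H$, $\mu$-quasiconvexity of $H$ puts $p_h$ in the $\mu$-neighbourhood of $gH$. (c)~For every $j$ one has $d(v_j,K)\ge j$: if $k_0\in K$ realises $d(v_j,k_0)=r$, then $k_0^{-1}g\in KgH$, so $n=|g|\le|k_0^{-1}g|=d(k_0,g)\le d(k_0,v_j)+d(v_j,g)=r+(n-j)$, hence $r\ge j$. (d)~Similarly $d(v_j,gH)\ge n-j$, using $gh_0\in KgH$ and $|gh_0|=d(1,gh_0)\le d(1,v_j)+d(v_j,gh_0)$.

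\emph{Main step.} Fix $i$ with $2\delta+\mu\le i\le n-2\delta-\mu$. By $2\delta$-thinness there is a point $w$ on $p_h\cup p'\cup p_k$ with $d(v_i,w)\le 2\delta$. If $w\in p_k$, then (a) gives $d(v_i,K)\le 2\delta+\mu$, which with (c) forces $i\le 2\delta+\mu$; symmetrically, (b) and (d) show that $w\in p_h$ forces $i\ge n-2\delta-\mu$. Hence whenever $2\delta+\mu<i<n-2\delta-\mu$ we must have $w\in p'$; let $t$ be the distance along $p'$ from $v'_0=ghg^{-1}$ to $w$, so that $d(w,gh)=n-t$. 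Since $ghg^{-1}\in K$ and $gh\in gH$, facts (c) and (d) give
\[
i\ \le\ d(v_i,ghg^{-1})\ \le\ d(v_i,w)+d(w,ghg^{-1})\ \le\ 2\delta+t ,
\]
and likewise $n-i\le d(v_i,gh)\le 2\delta+(n-t)$; together these yield $|t-i|\le 2\delta$. As $w$ and $v'_i$ both lie on the geodesic $p'$ we have $d(w,v'_i)=|t-i|$, and therefore
\[
d(v_i,v'_i)\ \le\ d(v_i,w)+d(w,v'_i)\ \le\ 2\delta+|t-i|\ \le\ 4\delta\ <\ 8\delta+\mu .
\]

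\emph{Boundary indices and the main obstacle.} For $i=2\delta+\mu$ and $i=n-2\delta-\mu$, when these are distinct from the interior I would simply compare with an adjacent interior vertex: $d(v_i,v'_i)\le d(v_i,v_{i\pm1})+d(v_{i\pm1},v'_{i\pm1})+d(v'_{i\pm1},v'_i)\le 1+4\delta+1<8\delta+\mu$, the single remaining case $n=4\delta+2\mu$ being dispatched by hand. The fussy point is exactly this: at the two endpoint indices $v_i$ can genuinely be $2\delta$-close to $p_k$ or $p_h$ rather than to $p'$, so the ``same parameter on the opposite side'' estimate of the main step does not apply there verbatim, and this is what the slack between the $4\delta$ that the interior argument actually yields and the stated $8\delta+\mu$ is for. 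All the substance lies in facts (a)--(d) together with $2\delta$-thinness; everything else is arithmetic.
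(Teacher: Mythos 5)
Your argument is correct in substance but reaches the conclusion by a genuinely different, and in fact sharper, route than the paper's. The paper first shows (as you do, via the shortest-double-coset-representative property) that an interior $v_i$ cannot be $2\delta$-close to $p_k$ or to $p_h$, so it is $2\delta$-close to some $v'_{j(i)}$ on $p'$; it then controls $|j(i)-i|$ in two stages, first proving $d(v_b,v'_b)<4\delta+\mu$ at the single anchor index $b=n-2\delta-\mu$ by analyzing an auxiliary geodesic $4$-gon, and then propagating that bound to all $i$ by the triangle inequality along the geodesics $p$ and $p'$, which is what produces the constant $8\delta+\mu$. Your facts (c) and (d) --- the lower bounds $d(v_j,K)\ge j$ and $d(v_j,gH)\ge n-j$ --- let you skip the anchor entirely: applied to the endpoints $ghg^{-1}\in K$ and $gh\in gH$ of $p'$, they pin the arc-length parameter $t$ of the companion point $w\in p'$ to within $2\delta$ of $i$ in one step, yielding $d(v_i,v'_i)\le 4\delta$ for interior $i$. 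That is cleaner and gives a better constant; the underlying mechanism is the same (the paper's computation with $Lab(r'r t_l)\in Kg$ is your fact (c) in disguise), but the paper never isolates it as a distance lower bound.

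The one soft spot is the boundary. Your non-strict inequalities make the trichotomy inconclusive exactly at $i=2\delta+\mu$ and $i=n-2\delta-\mu$, and your patch via an adjacent interior vertex requires such a vertex to exist, which fails not only for $n=4\delta+2\mu$ but also for $n=4\delta+2\mu+1$; moreover "dispatched by hand" is not actually carried out, and it is not trivial --- in the degenerate case the midpoint of $p$ could a priori be $2\delta$-close to $p_k$, and none of (a)--(d) then bounds $d(v_i,v'_i)$ directly. The cheapest repair is to adopt the paper's convention that quasiconvexity supplies a point of $K$ at distance strictly less than $\mu$ from $w$: then $w\in p_k$ forces $i<2\delta+\mu$ strictly, the two boundary indices become ``interior'' for the purposes of your trichotomy, and the main step covers the full stated range with no patching. (Alternatively, the use of this lemma in Lemma 1 survives shrinking the index range by one on each side, so the boundary indices could simply be dropped.) With that one-line adjustment your proof is complete.
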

\begin{proof}
Let  $l$ be the biggest index such that
the vertex   $v_l$ belongs to the $2 \delta$-neighborhood of $p_k$, 
let $w_l$ be a vertex in $p_k$ closest to $v_l$, and let $r$ be a geodesic
joining $w_l$ to $v_l$. By construction, $Lab(p_k) =k \in K$. As $K$ is $\mu$-quasiconvex,  $p_k$
belongs to the $\mu$-neighborhood of $K$ in $Cayley(G)$, hence there exists
a vertex $u_l \in K$ such that $d(w_l,u_l) < \mu$. Let $r'$ be a geodesic
joining $u_l$ to $w_l$. Let $s_l$ be a subpath of $p$ joining
$v_0$ to $v_l$, let $\bar{s_l}$ be the inverse of the path $s_l$, and let $t_l$ be a subpath of $p$ joining $v_l$ to $v_n$.
Note that $Lab(r'rt_l)=Lab(r'r\bar{s_l})(s_lt_l)=Lab(r'r\bar{s_l})g \in Kg$.
As $g$ is a shortest representative of $K g H$, it follows that
$|g|=|p|=|s_l|+|t_l| \le |r' r t_l| < 2\delta+ \mu +|t_l|$, so
$|s_l|= d(v_0, v_l)=l < 2 \delta +\mu$. 
Hence if $i \ge \mu+2 \delta$, then $d(v_i,p_k) > 2\delta$.

Let  $i$ be the smallest index such that
the vertex   $v_i$ belongs to the $2 \delta$-neighborhood of $p_h$.
An argument, similar to the above, shows that 
for any   $j \le n -  2 \delta - \mu$, $d(v_j,p_h)> 2 \delta$. 

Therefore, for any index $i$ such that 
$2 \delta +\mu \le i  \le n-2 \delta -\mu$, the 
vertex   $v_i$ belongs to the $2 \delta$-neighborhood of $p'$.
Similarly,  for any index $i$ such that 
$2 \delta +\mu \le i \le n-2 \delta -\mu$
the vertex $v'_i$ belongs to the $2 \delta$-neighborhood of $p$.

Let $b=n-2\delta-\mu$. We claim that $d(v_b, v'_b) < 4\delta+\mu$.
Indeed, let $j(b) \le b$ be an index such that 
$d(v_b,v'_{j(b)}) < 2 \delta$.
Let $t_b$ be the subpath of $p$ joining $v_b$ and $v_n$, let
$t'_{j(b)}$ be the subpath of $p'$ joining $v'_{j(b)}$ to $v'_n$, and let
$\gamma$ be a geodesic joining $v_b$ and $v'_{j(b)}$.
Consider the geodesic $4$-gon formed by $t_b, p_h, t'_{j(b)}$ and $\gamma$.

As $ b \le n-2 \delta -\mu$, it follows that 
$d(v'_b, p_h) > 2 \delta$.  If $d(v'_b, \gamma) < 2\delta$,
then $d(v_b, v'_b) \le |\gamma| + d(v'_b, \gamma) < 4 \delta$.
If $d(v'_b, t_b) < 2\delta$,
then $d(v_b, v'_b) \le |t_b| + d(v'_b, t_b) < 4 \delta +\mu$.

Now consider $ 2 \delta +\mu \le i \le  n- 2\delta -\mu$.
Let $j(i)$ be an index such that $d(v_i,v'_{j(i)}) < 2 \delta$.
By interchanging
$v_i$ and $v'_{j(i)}$, if needed, we can assume that $j(i) \ge i$. As $p$ 
is a geodesic, $d(v_i, v_b)=b-i \le d(v_i,v'_{j(i)})+d(v'_{j(i)},v'_b)+
d(v_b, v'_b) < 2 \delta +(b-j(i))+4\delta +\mu$, hence 
$0 \le j(i)-i  < 6 \delta +\mu$.
But then $d(v_i,v'_i) \le d(v_i,v'_{j(i)})+d(v'_{j(i)},v'_i) < 2
\delta +(j(i)-i)  < 8 \delta +\mu$, proving Lemma 3.
\end{proof}

\textbf{Proof of Lemma 1.}

Assume that  $|g|=n \ge C$, where $C$ is defined in the statement of Lemma 1. It follows that 
$(n-2 \delta -\mu)- (2\delta +\mu) \ge C-4 \delta -2\mu = L \cdot (m^2 +1)$. 
Therefore Lemma 3 implies that  there exists a set of distinct indexes
$\{ i_j | 1 \le j \le m^2 +1 \}$ such that: 

\begin{enumerate}
\item $n-2\delta-\mu \ge i_j \ge 2\delta +\mu$,

\item the paths  connecting $v_{i_j}$ to $v'_{i_j}$ 
 have the same label, say $a$, for all $i_j$.
\end{enumerate}

Let $s_{i_j}$ be the initial subpath of $p$ connecting $v_0$ and $v_{i_j}$ 
and let $s'_{i_j}$ be the initial subpath of $p'$ connecting $v'_0$ and
$v'_{i_j}$.
 If $a=1$, then $v_{i_1}=v'_{i_1}$. 
It follows that $Lab(s_{i_1})^{-1}  k Lab(s'_{i_1}) =1$, 
 hence $k=h=1$, contradicting the choice of $h$. 

If $a \neq 1$, consider the set 
$S= \{Lab(s_{i_j}^{-1} )k Lab(s_{i_j}), 1 \le i_j \le m^2 +1 \}$.
As $Lab(s_{i_j})^{-1}k Lab(s_{i_j}) = a \neq 1$ for all 
$1 \le i_j \le m^2+1$, and as $G$ is 
torsion-free, it follows
that the intersection of any pair of  elements of $S$ is infinite.

However, the elements of $S$ are essentially distinct.
Indeed, assume that there exists $k_0 \in K$ such that 
$k_0 Lab(s_{i_j})=Lab(s_{i_l})$.

Without loss of generality, $i_l > i_j$.  Let $t_{i_l}$ be the subpath of $p$ joining $v_{i_l}$ to 
$v_n$. Then $g=Lab(s_{i_l}) Lab(t_{i_l})=k_0 Lab(s_{i_j}) Lab(t_{i_l})$. 
Hence the element $Lab(s_{i_j}) Lab(t_{i_l})$
belongs to $Kg$ and 
$|Lab(s_{i_j}) Lab(t_{i_l})| \le |s_{i_j}|+|t_{i_j}| <|s_{i_l}| + |t_{i_l}|=|g|$,
contradicting the choice of $g$ as a shortest representative of the double coset $KgH$.
So $S$ is a collection of $m^2+1$ distinct conjugates of $K$ such that 
any two elements of $S$ have infinite intersection, contradicting Theorem GMRS.  

Hence $|g| < C$, proving  Lemma 1.

\begin{remark}
By increasing the quasiconvexity constant $\mu$ if needed,
we can assume that $\mu$ is a positive integer.
\end{remark}

\begin{lemma} 
Let $g$ be an element shorter than $4\delta + 2\mu$ such that 
$ghg^{-1} \in K$
for a non-trivial $h \in H$. If $h$ is longer than 
$(L'+2)2\mu +8 \delta$, where $L'$ is the number of words in $G$
shorter than $2 \delta +2\mu$, 
then there exist a non-trivial $h_0 \in H$ with $|h_0| \le 2\mu (L'+2)$ 
and  $g_0 \in G$ with $|g_0| < 2\delta + 2\mu$ 
such that $g_0 h_0 g_0^{-1} \in K$.
\end{lemma}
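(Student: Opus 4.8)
The plan is to reuse the geodesic quadrilateral with vertices $1,g,gh,ghg^{-1}$ and sides $p$ (label $g$), $p_h$ (label $h$), $p'$ (label $g$), $p_k$ (label $k=ghg^{-1}$) from the discussion before Lemma~3, now under the hypothesis $|g|<4\delta+2\mu$; it is $2\delta$-thin. Write $N=|h|$ and let $u_i$ be the vertex of $p_h$ at distance $i$ from $g$, so $u_i=gh_i$ where $h_i$ is the length-$i$ prefix of the geodesic word $h$. Thinness gives $u_i\in N_{2\delta}(p\cup p'\cup p_k)$; if $u_i$ were within $2\delta$ of $p$ (resp.\ $p'$) then $d(u_i,g)$ (resp.\ $d(u_i,gh)$) would be at most $2\delta+|g|<6\delta+2\mu$, forcing $i$ (resp.\ $N-i$) to be small. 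Hence for every $i$ in a middle range $[A,\,N-A]$, with $A$ an explicit constant of size a few $\delta$ plus a small multiple of $\mu$, the vertex $u_i$ lies within $2\delta$ of $p_k$; since $p_k$ has both endpoints $1,k$ in $K$ and $K$ is $\mu$-quasiconvex, $u_i\in N_{2\delta+\mu}(K)$. On the other hand $h_i$ lies on the geodesic from $1$ to $h$, whose endpoints are in $H$, so $h_i\in N_\mu(H)$, whence $u_i=gh_i\in N_\mu(gH)$. Thus each such $u_i$ is simultaneously \emph{close to a coset of $H$} and \emph{close to $K$}.

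For each $i$ in the middle range fix $\kappa_i\in K$ and $\eta_i\in H$ with $d(u_i,\kappa_i)<2\delta+\mu$ and $d(u_i,g\eta_i)<\mu$, and set $g_i=\kappa_i^{-1}g\eta_i$; then $|g_i|=d(\kappa_i,g\eta_i)\le d(\kappa_i,u_i)+d(u_i,g\eta_i)<2\delta+2\mu$. Next choose indices $i_0<i_1<\dots<i_{L'+1}$ in the middle range with $i_t=i_0+2\mu t$ — this is the step where the hypothesis $|h|>(L'+2)2\mu+8\delta$ enters, as it forces $[A,N-A]$ to be long enough to contain these $L'+2$ points spaced $2\mu$ apart. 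Since each $g_{i_t}$ is an element of length $<2\delta+2\mu$ and there are at most $L'$ such elements, the pigeonhole principle yields $j<l$ with $g_{i_j}=g_{i_l}$; denote the common value $g_0$, so $|g_0|<2\delta+2\mu$.

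From $\kappa_{i_j}^{-1}g\eta_{i_j}=g_0=\kappa_{i_l}^{-1}g\eta_{i_l}$ we obtain $g=\kappa_{i_j}g_0\eta_{i_j}^{-1}=\kappa_{i_l}g_0\eta_{i_l}^{-1}$, hence $g_0(\eta_{i_l}^{-1}\eta_{i_j})g_0^{-1}=\kappa_{i_l}^{-1}\kappa_{i_j}\in K$. Set $h_0=\eta_{i_l}^{-1}\eta_{i_j}\in H$; then $g_0h_0g_0^{-1}\in K$ with $|g_0|<2\delta+2\mu$. For the length bound, using that $h_{i_j},h_{i_l}$ lie on the same geodesic at distance $i_l-i_j$ apart, $|h_0|=d(\eta_{i_j},\eta_{i_l})\le d(\eta_{i_j},h_{i_j})+(i_l-i_j)+d(h_{i_l},\eta_{i_l})<2\mu+(i_l-i_j)\le 2\mu+2\mu(L'+1)=2\mu(L'+2)$. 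Finally $h_0\ne 1$: since $i_l-i_j\ge 2\mu$, the two strict bounds $d(h_{i_j},\eta_{i_j})<\mu$ and $d(h_{i_l},\eta_{i_l})<\mu$ give $d(\eta_{i_j},\eta_{i_l})>d(h_{i_j},h_{i_l})-2\mu=(i_l-i_j)-2\mu\ge 0$, so $\eta_{i_j}\ne\eta_{i_l}$.

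I expect the real work to be constant bookkeeping rather than new ideas: pinning down the smallest $A$ for which every $u_i$ with $A\le i\le N-A$ is within $2\delta$ of $p_k$ — a careful two-triangle thinness chase, sharper than the crude $6\delta+2\mu$ estimate above, in order to get the clean $8\delta$ in the hypothesis — and verifying that spacing the $L'+2$ chosen indices by exactly $2\mu$ both fits inside $[A,N-A]$ under the stated bound on $|h|$ and keeps $i_l-i_j\le 2\mu(L'+1)$, so that $|h_0|\le 2\mu(L'+2)$. The conceptual skeleton (thin quadrilateral forces $p_h$ near $p_k$; the two quasiconvexity hypotheses squeeze $u_i$ between a coset of $H$ and $K$; pigeonhole over short conjugators; a minimum spacing of $2\mu$ keeps $h_0$ nontrivial) is otherwise routine, and notably does not need torsion-freeness or minimality of $g$ in its double coset.
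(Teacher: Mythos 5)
Your argument is, in skeleton and in nearly every detail, the proof the paper gives: the same $2\delta$-thin quadrilateral $p,p_h,p',p_k$, the same observation that the middle portion of $p_h$ is simultaneously within $\mu$ of the coset $gH$ and within $2\delta+\mu$ of $K$, the same pigeonhole on the short connectors $\kappa_i^{-1}g\eta_i$ (the paper's $Lab(\beta_i)$), the same identity $g_0h_0g_0^{-1}=\kappa_{i_l}^{-1}\kappa_{i_j}\in K$, and the same $2\mu$-spacing device to force $h_0\neq 1$. You are also right that torsion-freeness plays no role in this lemma.

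The gap is exactly the one you flagged and then dismissed as ``constant bookkeeping,'' and it is not bookkeeping. With your bound $A=6\delta+2\mu$ on how far into $p_h$ the $2\delta$-neighborhoods of $p$ and $p'$ can reach, the middle range has length greater than $|h|-12\delta-4\mu$, which under the hypothesis $|h|>(L'+2)2\mu+8\delta$ only guarantees length $>2\mu L'-4\delta$; you need $2\mu L'$ even after economizing to $L'+1$ sample points (spanning $2\mu L'$), and $2\mu(L'+1)$ with your $L'+2$ points. So the stated hypothesis falls short by $4\delta$ (respectively $4\delta+2\mu$), and no sharper thinness chase extracts more from the assumption $|g|<4\delta+2\mu$ alone: a vertex of $p_h$ at distance up to $|g|+2\delta$ from $g$ can a priori lie within $2\delta$ of $p$. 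The paper closes this gap with precisely the ingredient you assert is not needed, namely the standing assumption of Section 3 that $g$ is a shortest element of the double coset $KgH$. If $v^h_{q'}$ is the last vertex of $p_h$ lying within $2\delta$ of a vertex $v_{q'}$ of $p$, quasiconvexity of $H$ produces a point of $gH$ within $\mu$ of $v^h_{q'}$, hence an element of $KgH$ of length at most $|s_{q'}|+2\delta+\mu$; minimality then forces the terminal segment of $p$ beyond $v_{q'}$ to have length at most $2\delta+\mu$, so the initial segment of $p_h$ near $p$ has length at most $4\delta+\mu$ (and symmetrically for $p'$), leaving a middle segment longer than $(L'+1)2\mu$ --- exactly enough. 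To repair your write-up you must either import the double-coset minimality of $g$ (harmless, since that is how Lemma 4 is actually applied via Theorem 1) or weaken the lemma by enlarging the lower bound on $|h|$; as written, your estimate does not reach the stated constants.
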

\begin{proof}
Let $p, p_k, p' $ and  $p_h$ be a geodesic $4$-gon, as in the proof of 
Lemma 3. Denote the vertices of $p_h$ in their linear order by $g=v_0^h, v_1^h, \cdots v_f^h=gh$.

Let $q'$ be the maximal initial subpath of $p_h$
which belongs to the $2 \delta$-neighborhood of $p$. Note that the length of $q'$ is at most $4\delta +\mu$.
Indeed, let $v^h_{q'}$ be the terminal vertex of $q'$. Let $v_{q'}$ be a vertex of $p$ such that $d(v^h_{q'}, v_{q'}) \leq 2 \delta$.
Let $\alpha$ be a geodesic in $Cayley(G)$ which begins at $v_{q'}$ and ends at $v^h_{q'}$. Let $s_{q'}$ be the initial subpath of $p$
joining $v_0$ to $v_{q'}$ and let $t_{q'}$ be the terminal subpath of $p$ joining $v_{q'}$ to $v_n=g$. As $H$ is $\mu$-quasiconvex in $G$,
there exists a vertex $x_{q'}$ in $Cayley(G)$ and a geodesic $\alpha'$ joining $v^h_{q'}$ to $x_{q'}$ such that $Lab(q_h \alpha') \in H$ and
$|\alpha'| < \mu$. As $g$ is a shortest element in the double coset $KgH$, it follows that 
$|g|=|s_{q'}|+|t_{q'}| \leq |s_{q'}|+|\alpha|+|\alpha'| \leq |s_{q'}|+ 2\delta + \mu$. Hence $|t_{q'}| \leq 2\delta + \mu$.
It follows that $|q'| \leq |t_{q'}| + |\alpha| \leq 4\delta + \mu$.

Similarly, the length of the maximal subpath of $p_h$
which belongs to the $2 \delta$-neighborhood of $p'$ is at most $4\delta +\mu$.

Assume that $h$ is longer than $(L'+2)2\mu+8 \delta$. 
Then there exists a subpath $q$ of $p_h$ of length at least
$(L'+1) 2\mu $ which belongs to the $2 \delta$-neighborhood of $p_k$. 
By construction, $q$ begins at the vertex $v^h_{q'}$.
By definition of the path $q$, for any vertex
$v^h_i$ of $q$ there exists a vertex $w(v^h_i)$ in $p_k$ such that $d(v^h_i, w(v^h_i)) < 2\delta$. 
As $H$ is $\mu$-quasiconvex in $G$,  for any vertex
$v^h_i$ of $q$ there exists a vertex $x_i$ such that $d(v^h_i, x_i) <\mu$, and the 
element $x_i$ belongs to the coset $gH$. Similarly, there exists a vertex $k(v^h_i)$ such that 
$d(w(v^h_i), k(v^h_i)) <\mu$ and the element $k(v^h_i)$ belongs to $K$.  Let
$\beta_i$ be a geodesic joining $k(v^h_i)$ and $x_i$. 
Then $|\beta_i| < 2\mu +2 \delta$.

Consider a subset of vertices of $q$ with indexes $v^h_{q'}, v^h_{q' + 2 \mu}, \cdots, v^h_{q' + j \cdot 2 \mu}, \cdots v^h_{q' + L' \cdot 2 \mu}$.
The distance between two consecutive vertices in this subset is  $2\mu$, hence $x_{q' +i \cdot 2\mu} \neq x_{q' +j \cdot 2\mu}$ for $i \neq j$. 

By definition of the constant $L'$, there exist indexes $i \neq j$  such that $Lab(\beta_{q' + i \cdot 2\mu})=Lab(\beta_{q'+j \cdot 2 \mu})$.
By construction, $d(v_{q'+i \cdot 2 \mu}, v_{q'+j \cdot 2 \mu} ) \le 2\mu(L' +1)$, 

so $d(x_{q'+i \cdot 2\mu}, x_{q'+j \cdot 2\mu}) < 2\mu+2\mu(L'+1)=2\mu(L'+2)$. 

By construction, if $\nu$ is a geodesic joining $x_{q'+i \cdot 2\mu}$ and  $x_{q'+j \cdot 2\mu}$,  then
$Lab(\nu) \in  H$. Similarly, if $\nu'$ is a geodesic joining $k(v_{q'+i \cdot 2\mu})$ and $k(v_{q'+j \cdot 2\mu})$,  then
$Lab(\nu') \in  K$. 
So take $g_0=Lab(\beta(v_{q'+i \cdot 2\mu})$ and $h_0=Lab(\nu)$, proving Lemma 4.
\end{proof}

\textbf{Proof of Lemma 2.}

Let $h$ be a non-trivial element of $H$ such that $ghg^{-1} \in K$ for 
some $g \in G$ with $|g| <C$, where $C$ is defined in the statement of Lemma 1. 
We want to find  $h_0 \in H$ with $|h_0| < C'$, where $C'$ is defined in the statement of Lemma 2,
and $g_0 \in G$, which might be different from $g$, with $|g_0|< C$ such that   $g_0h_0g_0^{-1} \in K$.

Consider three cases.
\begin{enumerate}
\item
If $|g| < 4\delta + 2\mu$ and $|h| \le (L'+2)2\mu+8 \delta$, 
take $g_0=g$ and $h_0=h$.
\item 
If $|g| < 4\delta + 2\mu$ and $|h| > (L'+2)2\mu +8 \delta$, then Lemma 4 states that
there exist a non-trivial $h_0 \in H$ with $|h_0| \le 2\mu (L'+2)$ 
and  $g_0 \in G$ with $|g_0| < 2\delta + 2\mu$ 
such that $g_0 h_0 g_0^{-1} \in K$.
\item
If $C > |g| \ge 4\delta + 2\mu$, let $p, p', v_b, v'_b$ and $p_h$ be
as in the proof of Lemma 3. 
It is shown in Lemma 3 that $d(v_b, v'_b) < 4\delta +\mu$. 
Then $|h|=|p_h| \le d(v_b,v_n) +d(v_b, v'_b)
+ d(v'_b,v'_n) < (\mu+ 2\delta) + (\mu+ 4\delta) + (\mu+ 2\delta) 
<(3\mu+ 8\delta)$.
Hence we can take $g_0=g$ and $h_0=h$, proving Lemma 2.
\end{enumerate}

\section{Acknowledgment}
The author would like to thank Hans Boden, Mike Davis, Cameron Gordon, Dinakar Ramakrishnan, and Eliyahu Rips for their support.

\enddocument